\documentclass[a4paper,leqno,12pt]{amsart}
\usepackage{epsfig}
\usepackage{amsmath}
 \usepackage[pdftex]{hyperref}
\usepackage{amsfonts}
\usepackage{amsmath,amscd}
\usepackage{amssymb}
\usepackage{amsthm}
\usepackage{mathrsfs}
\usepackage[mathscr]{euscript}
\usepackage{graphicx}
\usepackage{xypic}

\setlength{\topmargin}{0mm}
\setlength{\textheight}{9.0in}
\setlength{\oddsidemargin}{.1in}
\setlength{\evensidemargin}{.1in}
\setlength{\textwidth}{6.0in}

\setlength{\textwidth}{6.0in}

\theoremstyle{plain}
\newtheorem{definition}{Definition}[section]
\newtheorem{theorem}[definition]{Theorem}
\newtheorem{remark}[definition]{Remark}
\newtheorem{final Remarks}[definition]{Final Remarks}
\newtheorem{lemma}[definition]{Lemma}
\newtheorem{question}[definition]{Question}

\numberwithin{equation}{section}
\begin{document}
\title{Farrell-Jones spheres and inertia groups of complex projective spaces}
\vspace{2cm}

\author{Ramesh Kasilingam}

\email{rameshkasilingam.iitb@gmail.com  ; mathsramesh1984@gmail.com  }
\address{Statistics and Mathematics Unit,
Indian Statistical Institute,
Bangalore Centre, Bangalore - 560059, Karnataka, India.}

\date{}
\subjclass [2010] {57R60, 57C24, 57R55, 53C56}
\keywords{Locally symmetric space, exotic smooth structure, complex hyperbolic, inertia groups}

\maketitle
\begin{abstract}
We introduce and study a new class of homotopy spheres called Farrell-Jones spheres. Using Farrell-Jones sphere we construct examples of closed negatively curved manifolds $M^{2n}$, where $n=7$ or $8$, which are homeomorphic but not diffeomorphic to complex hyperbolic manifolds, thereby giving a partial answer to a question raised by C.S. Aravinda and F.T. Farrell. We show that every exotic sphere not bounding a spin manifold (Hitchin sphere) is a Farrell-Jones sphere. We also discuss the relationship between inertia groups of $\mathbb{C}\mathbb{P}^n$ and Farrell-Jones spheres.    
\end{abstract}

\section{Introduction}
Let $\Theta_m$ be the group of homotopy spheres defined by M. Kervaire and J. Milnor in \cite{KM63}.
\begin{definition}\label{farrell}\rm{
We call $\Sigma^{2n}\in \Theta_{2n}$~$(n\geq4)$ a Farrell-Jones sphere if $\mathbb{C}\mathbb{P}^{n}\#\Sigma^{2n}$ is not concordant to $\mathbb{C}\mathbb{P}^{n}$.}
\end{definition}
The following theorem gives an equivalent definition of Farrell-Jones spheres, which we prove in Section 3:
\begin{theorem}\label{condiff}
 Let $\Sigma^{2n}$ be an exotic sphere in $\Theta_{2n}$~$(n\geq4)$.\\
$\Sigma^{2n}$ is a Farrell-Jones sphere if and only if $\mathbb{C}\mathbb{P}^{n}\#\Sigma^{2n}$ is not diffeomorphic to $\mathbb{C}\mathbb{P}^{n}$.
\end{theorem}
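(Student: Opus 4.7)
The plan is to prove each direction separately. The ``only if'' direction (concordance implies diffeomorphism) is essentially formal, while the ``if'' direction (diffeomorphism implies concordance) carries the real content and will be the main obstacle.

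For the easy direction, a concordance between the smooth structures $\mathbb{C}\mathbb{P}^{n}\#\Sigma^{2n}$ and $\mathbb{C}\mathbb{P}^{n}$ is by definition a smooth structure on $\mathbb{C}\mathbb{P}^{n}\times [0,1]$ restricting to the two given structures on the boundary components. The resulting manifold is a simply-connected smooth $h$-cobordism, and since $2n\geq 8$, Smale's $h$-cobordism theorem produces a diffeomorphism between its two boundary components, in particular showing that $\mathbb{C}\mathbb{P}^{n}\#\Sigma^{2n}$ is diffeomorphic to $\mathbb{C}\mathbb{P}^{n}$.

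For the converse, suppose $\Phi\colon \mathbb{C}\mathbb{P}^{n}\#\Sigma^{2n}\to \mathbb{C}\mathbb{P}^{n}$ is a diffeomorphism, and let $h\colon \mathbb{C}\mathbb{P}^{n}\#\Sigma^{2n}\to \mathbb{C}\mathbb{P}^{n}$ denote the canonical collapsing homotopy equivalence (which is in fact a homeomorphism, since $\Sigma^{2n}$ is topologically a sphere). The composite $h\circ \Phi^{-1}$ is a self-homotopy equivalence of $\mathbb{C}\mathbb{P}^{n}$. Here I would invoke the standard computation that the group of self-homotopy equivalences of $\mathbb{C}\mathbb{P}^{n}$ is isomorphic to $\mathbb{Z}/2$ (any self-map is determined up to homotopy by its effect on $H^{2}\cong \mathbb{Z}$, which must be $\pm 1$), with the nontrivial element represented by complex conjugation $c\colon \mathbb{C}\mathbb{P}^{n}\to \mathbb{C}\mathbb{P}^{n}$. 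Since $c$ is itself a smooth self-diffeomorphism of $\mathbb{C}\mathbb{P}^{n}$, by replacing $\Phi$ with $c\circ \Phi$ if necessary I may assume that $\Phi$ is homotopic to $h$.

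Given $\Phi\simeq h$, the concordance is constructed explicitly: take $W:=(\mathbb{C}\mathbb{P}^{n}\#\Sigma^{2n})\times [0,1]$ with its natural smooth product structure, and identify its top boundary with $\mathbb{C}\mathbb{P}^{n}$ via the diffeomorphism $\Phi$, so that $W$ becomes a smooth $h$-cobordism between the two smooth structures on $\mathbb{C}\mathbb{P}^{n}$ under consideration. Promoting the homotopy $h\simeq \Phi$ to a topological isotopy $\Phi_{t}$ (available in this range of dimensions, since homotopic homeomorphisms between simply-connected high-dimensional manifolds are topologically isotopic), define $F\colon W \to \mathbb{C}\mathbb{P}^{n}\times [0,1]$ by $F(x,t) = (\Phi_{t}(x), t)$; this is a homeomorphism restricting to $h$ at $t=0$ and, under the $\Phi$-identification, to $\mathrm{id}_{\mathbb{C}\mathbb{P}^{n}}$ at $t=1$, which exhibits the desired concordance. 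The main obstacle in the whole argument is the reduction to the case $\Phi\simeq h$, and it is resolved only because of the fortunate fact that the nontrivial self-homotopy equivalence of $\mathbb{C}\mathbb{P}^{n}$ is represented by the smooth involution $c$; without a diffeomorphism in each homotopy class of self-equivalences, the proof would break down.
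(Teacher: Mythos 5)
Your reduction is the same as the paper's: use complex conjugation $c$ to arrange that the given diffeomorphism $\Phi$ preserves the $c$-orientation (equivalently, is homotopic to the canonical collapse $h$), and then try to upgrade ``homotopic'' to ``topologically concordant.'' The easy direction via the $h$-cobordism theorem is fine (in fact, with the paper's definition of concordance it is immediate, since a concordance already packages a diffeomorphism $g:N_1\to N_2$).

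The gap is the final paragraph of the hard direction. You assert that ``homotopic homeomorphisms between simply-connected high-dimensional manifolds are topologically isotopic'' and treat it as a general principle. This is not a theorem in that generality; even the weaker statement with ``isotopic'' replaced by ``concordant'' fails for general simply-connected $M$ of dimension $\geq 5$, since the discrepancy between homotopy classes and topological concordance classes of self-homeomorphisms of $M$ is measured by the rel-boundary structure set $\mathcal{S}^{TOP}(M\times[0,1],\partial)$, which is typically nonzero. The reason it works for $\mathbb{CP}^n$ is a genuine computation: $\mathbb{CP}^n$ has cells only in even dimensions while $\pi_*(G/TOP)$ is concentrated in even degrees, so the relevant normal invariant group $[\Sigma(\mathbb{CP}^n_+),G/TOP]\cong[\mathbb{CP}^n_+,\Omega(G/TOP)]$ vanishes, and Sullivan exploited exactly this to show that smooth structures on $\mathbb{CP}^n$ are determined up to concordance by their $c$-orientation-preserving diffeomorphism type. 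That is the result the paper cites as [Sul67, Corollary~3, p.~97], and it is the non-trivial input your proof is implicitly using without justification. To close the gap you must replace the appeal to a general isotopy principle with this $\mathbb{CP}^n$-specific statement (either by citing Sullivan, or by carrying out the normal-invariant computation just sketched). As written, ``the main obstacle'' you identify --- reduction to $\Phi\simeq h$ --- is the formal part; the actual main obstacle is the step you dispatch in one clause.
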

By \cite[Lemma 3.17]{FJ94}, there exists a Farrell-Jones sphere $\Sigma^{m}\in \Theta_{m}$ for all $m=8n+2$ $(n\geq 1)$ and for $m=8$. Also we prove the following theorem in Section 3:
\begin{theorem}\label{exotho}
The non zero element of  $\Theta_{2n}\cong \mathbb{Z}_2$ $(n=7~or~8)$ is a Farrell-Jones sphere.
 \end{theorem}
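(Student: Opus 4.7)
By Theorem~\ref{condiff}, it suffices to prove that $\mathbb{C}\mathbb{P}^{n}\#\Sigma^{2n}$ is not diffeomorphic to $\mathbb{C}\mathbb{P}^{n}$, where $\Sigma^{2n}$ generates $\Theta_{2n}\cong\mathbb{Z}/2$ and $n=7$ or $8$; equivalently, that $\Sigma^{2n}$ does not lie in the inertia group $I(\mathbb{C}\mathbb{P}^n)$. The plan is to detect this through the smooth surgery exact sequence
\begin{equation*}
L_{2n+1}(\mathbb{Z})\longrightarrow\mathcal{S}^{\mathrm{Diff}}(\mathbb{C}\mathbb{P}^{n})\stackrel{\eta}{\longrightarrow}[\mathbb{C}\mathbb{P}^{n},G/O]\stackrel{\sigma}{\longrightarrow}L_{2n}(\mathbb{Z}).
\end{equation*}
Since $L_{15}(\mathbb{Z})=L_{17}(\mathbb{Z})=0$, the normal-invariant map $\eta$ is injective, so it is enough to show that $\eta([\mathbb{C}\mathbb{P}^{n}])$ and $\eta([\mathbb{C}\mathbb{P}^{n}\#\Sigma^{2n}])$ are distinct in $[\mathbb{C}\mathbb{P}^{n},G/O]$.

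Their difference equals the image of $\Sigma^{2n}$ under the composite
\begin{equation*}
\Theta_{2n}\hookrightarrow\pi_{2n}(G/O)\xrightarrow{p^{*}}[\mathbb{C}\mathbb{P}^{n},G/O],
\end{equation*}
where $p:\mathbb{C}\mathbb{P}^{n}\to S^{2n}$ is the pinch map collapsing $\mathbb{C}\mathbb{P}^{n-1}$; the first arrow is injective by the Kervaire-Milnor surgery sequence for the sphere (again using $L_{2n+1}(\mathbb{Z})=0$). So I must show that $p^{*}$ is nonzero on the generator of $\Theta_{2n}$. The Puppe sequence of $\mathbb{C}\mathbb{P}^{n-1}\hookrightarrow\mathbb{C}\mathbb{P}^{n}\xrightarrow{p}S^{2n}$ identifies $\ker(p^{*})$ with the image of a boundary map $\delta:[\Sigma\mathbb{C}\mathbb{P}^{n-1},G/O]\to\pi_{2n}(G/O)$ induced by the Hopf attaching map of the top cell, and the task reduces to showing that $\delta$ does not hit the generator.

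I would carry out this detection after $2$-localization, since the generator is $2$-torsion. Using the Sullivan/Anderson-Brown-Peterson description of $(G/O)_{(2)}$ as a product of $bo$-Postnikov stages, I would identify a $2$-primary detecting invariant for $\Sigma^{2n}$ in $\pi_{2n}(G/O)_{(2)}$: for $n=7$ (dimension $14$, a Kervaire dimension) the Kervaire invariant, and for $n=8$ (dimension $16$) a $bo$-characteristic class. In both cases these invariants are visible on the top cell of $\mathbb{C}\mathbb{P}^{n}$ and should survive the pinch map.

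The main obstacle is the explicit $2$-primary calculation that the boundary $\delta$ does not hit the chosen detecting invariant, which requires understanding the image of $\delta$ in terms of the Hopf attaching data. If that homotopy-theoretic computation proves unwieldy, a direct alternative is to exhibit a smooth invariant -- an Eells-Kuiper-type $\mu$-invariant or a Kreck-Stolz secondary invariant -- that takes different values on $\mathbb{C}\mathbb{P}^{n}$ and $\mathbb{C}\mathbb{P}^{n}\#\Sigma^{2n}$ in dimensions $14$ and $16$, bypassing the surgery-theoretic route.
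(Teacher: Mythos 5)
Your argument is a strategy outline rather than a proof: the decisive step is missing. After reducing (correctly, since the odd $L$-groups of the trivial group vanish) to showing that the generator of $\Theta_{2n}$ has nonzero image under $p^{*}\colon\pi_{2n}(G/O)\to[\mathbb{C}\mathbb{P}^{n},G/O]$, you explicitly defer the $2$-primary computation that the boundary map $\delta\colon[\Sigma\mathbb{C}\mathbb{P}^{n-1},G/O]\to\pi_{2n}(G/O)$ in the Puppe sequence does not hit the generator, and the proposed detecting invariants are only named, not computed (for $n=7$ the choice is in fact suspect: $\Theta_{14}$ sits in the coker $J$ part of $\pi_{14}^{s}$ and is not seen by a Kervaire-invariant surgery obstruction in any straightforward way, while $KO_{14}=0$ rules out the $\alpha$-type detection used elsewhere in the paper). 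The alternative route via Eells--Kuiper or Kreck--Stolz invariants is likewise only a suggestion. In addition, even granting that $\eta([\mathbb{C}\mathbb{P}^{n}])\neq\eta([\mathbb{C}\mathbb{P}^{n}\#\Sigma^{2n}])$, distinctness in $\mathcal{S}^{\mathrm{Diff}}(\mathbb{C}\mathbb{P}^{n})$ does not by itself give non-diffeomorphism of the underlying manifolds: you must still quotient by the action of the self-homotopy-equivalences of $\mathbb{C}\mathbb{P}^{n}$ (generated by complex conjugation), a point your write-up does not address.

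For comparison, the paper's proof is much shorter and avoids all of this: it assumes $\mathbb{C}\mathbb{P}^{n}\#\Sigma^{2n}$ is concordant to $\mathbb{C}\mathbb{P}^{n}$, upgrades this to a $c$-orientation preserving diffeomorphism by Sullivan's theorem, composes with the conjugation diffeomorphism $g$ if necessary to make it orientation preserving, and then contradicts Kawakubo's theorem that $\mathbb{C}\mathbb{P}^{7}\#\Sigma^{14}$ and $\mathbb{C}\mathbb{P}^{8}\#\Sigma^{16}$ are not orientation-preservingly diffeomorphic to $\mathbb{C}\mathbb{P}^{7}$, $\mathbb{C}\mathbb{P}^{8}$ (i.e.\ $I(\mathbb{C}\mathbb{P}^{7})=I(\mathbb{C}\mathbb{P}^{8})=0$). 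In effect, the hard computation you leave open is exactly the content of the cited result of Kawakubo; without either carrying it out or citing such a result, your proposal does not establish the theorem.
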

The study of Farrell-Jones spheres is motivated by the following result, which is a slight modification of \cite[Theorem 3.20]{FJ94}:
\begin{theorem}\label{mainthe}
Let $M^{2n}$ be any closed complex hyperbolic manifold of complex dimension $n$. Let $\Sigma^{2n}\in \Theta_{2n}$ be a Farrell-Jones sphere. Given a positive real number $\epsilon$, there exists a  finite sheeted cover $\mathcal{N}^{2n}$  of $M^{2n}$ such that the following is true for any finite sheeted cover $N^{2n}$  of $\mathcal{N}^{2n}$.
\begin{enumerate}
\item[\rm{(i)}] The smooth manifold $N^{2n}$ is not diffeomorphic to $N^{2n}\#\Sigma^{2n}$.
\item[\rm{(ii)}] The connected sum  $N^{2n}\#\Sigma^{2n}$ supports a negatively curved Riemannian metric whose sectional curvatures all lie in the closed interval $[-4 -\epsilon, -1 +\epsilon]$.
\end{enumerate}
\end{theorem}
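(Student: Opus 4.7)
The plan is to follow the strategy of [FJ94, Theorem 3.20], preserving the warped-metric construction for part (ii) and substituting a concordance-theoretic obstruction tailored to Farrell-Jones spheres for part (i). Both parts require passing to a sufficiently large finite cover, though for essentially independent reasons, so $\mathcal{N}$ will be chosen to simultaneously satisfy a geometric injectivity-radius bound and the topological surgery-theoretic condition appearing below.

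For part (ii), the plan is to apply the metric deformation developed in [FJ94, Section 3] directly. First I would choose $\mathcal{N}$ to be a finite sheeted cover of $M$ whose injectivity radius exceeds a threshold $\rho = \rho(\epsilon, \Sigma)$; such covers exist by residual finiteness of the fundamental group of a closed complex hyperbolic manifold. Any further cover $N$ of $\mathcal{N}$ automatically inherits this bound. Near a basepoint $p \in N$, one excises a small geodesic ball and reattaches a copy of $\Sigma \setminus \operatorname{int}(D^{2n})$ equipped with a rotationally symmetric warped-product metric that interpolates between the complex hyperbolic metric on its outer collar and an arbitrary smooth metric on $\Sigma$ near the center. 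The curvature estimates of [FJ94, Lemmas 3.7--3.9] then show that, provided the injectivity radius is taken large enough depending on $\epsilon$, the sectional curvatures of the resulting metric on $N \# \Sigma$ lie within $[-4-\epsilon,-1+\epsilon]$, matching the pinching of $\mathbb{CH}^n$ up to $\epsilon$.

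For part (i), suppose for contradiction that some further cover $N$ of $\mathcal{N}$ satisfies $N \cong N \# \Sigma$. By Theorem \ref{condiff} applied at the level of $\mathbb{CP}^n$, the obstruction that defines a Farrell-Jones sphere lives in the normal-invariant set $[\mathbb{CP}^n, G/O]$ via the pinch map $\mathbb{CP}^n \to S^{2n}$; equivalently, the image of $[\Sigma] \in \Theta_{2n}$ in $\mathcal{C}(\mathbb{CP}^n)$ is nonzero. The corresponding pinch map $c \colon N \to S^{2n}$ induces $c^*\colon \Theta_{2n} = [S^{2n}, G/O] \to [N, G/O]$ sending $[\Sigma]$ to the normal invariant of $N \# \Sigma$, and the diffeomorphism hypothesis forces this image to be trivial. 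The plan is then to transfer this vanishing to the analogous statement on $\mathbb{CP}^n$ by exploiting that both $N$ and $\mathbb{CP}^n$ are almost complex manifolds of complex dimension $n$ whose stable tangential data is classified through $BU$, and a diagram chase in the surgery exact sequences for $N$ and $\mathbb{CP}^n$ should produce a concordance between $\mathbb{CP}^n$ and $\mathbb{CP}^n \# \Sigma$, contradicting the Farrell-Jones property of $\Sigma$.

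The main obstacle is the transfer step in part (i): since no natural map $N \to \mathbb{CP}^n$ exists, the comparison of concordance invariants must be carried out at the level of classifying maps and characteristic classes, presumably via a variant of the detection argument of [FJ94, Lemma 3.17]. The geometric step (ii) is essentially a verbatim application of the existing warped-metric construction and contains no genuinely new difficulties beyond verifying that the curvature bound depends only on the injectivity radius and $\epsilon$, so that any sufficiently deep cover of $\mathcal{N}$ works.
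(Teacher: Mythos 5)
Your part (ii) is exactly what the paper does: Theorem \ref{mainthe}(ii) is quoted verbatim from Farrell--Jones (\cite[Corollary 3.14]{FJ94}, the warped/hybrid metric on $N\#\Sigma$ once the injectivity radius of the cover is large compared to $\epsilon$), and no new argument is needed there. The issue is part (i). The paper does not reprove it either; it invokes \cite[Proposition 3.19]{FJ94}, whose hypothesis is precisely the Farrell--Jones condition ``$\mathbb{C}\mathbb{P}^{n}\#\Sigma^{2n}$ is not concordant to $\mathbb{C}\mathbb{P}^{n}$'' and whose conclusion is the non-diffeomorphism statement for all sufficiently large finite covers. Your attempt to rederive that proposition has two genuine gaps. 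First, the step ``the diffeomorphism hypothesis forces this image to be trivial'' is not automatic: an arbitrary diffeomorphism $N\to N\#\Sigma$ need not be compatible with the canonical homeomorphism $h:N\#\Sigma\to N$, so it does not directly trivialize the class $[N\#\Sigma]\in\mathcal{C}(N)=[N,Top/O]$. One must first use Mostow rigidity (every self homotopy equivalence of a closed complex hyperbolic manifold is homotopic to an isometry) together with Farrell--Jones topological rigidity (homotopic homeomorphisms of such closed aspherical manifolds are topologically concordant) to replace the given diffeomorphism by one topologically concordant to $h$; this ingredient is essential in \cite{FJ94} and is absent from your sketch.

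Second, and more seriously, your ``transfer'' runs in the wrong direction. There is no way to push the vanishing of $f_N^*[\Sigma]$ in $[N,Top/O]$ forward to $[\mathbb{C}\mathbb{P}^{n},Top/O]$: every map $\mathbb{C}\mathbb{P}^{n}\to N$ is null-homotopic ($N$ is aspherical and $\mathbb{C}\mathbb{P}^{n}$ is simply connected), and the fact that both manifolds have stable tangential data classified through $BU$, together with two separate surgery exact sequences, provides no comparison homomorphism, so no diagram chase can manufacture a concordance between $\mathbb{C}\mathbb{P}^{n}\#\Sigma$ and $\mathbb{C}\mathbb{P}^{n}$. The workable comparison goes the other way, and your remark that ``no natural map $N\to\mathbb{C}\mathbb{P}^{n}$ exists'' is exactly backwards: after passing to a suitable finite cover one obtains a line bundle on $N$ (descended from the tautological bundle on complex hyperbolic space sitting inside $\mathbb{C}\mathbb{P}^{n}$), and compressing its classifying map $N\to\mathbb{C}\mathbb{P}^{\infty}$ into the $2n$-skeleton yields a map $u:N\to\mathbb{C}\mathbb{P}^{n}$; the finite cover $\mathcal{N}$ is needed for this topological step, not only for the injectivity-radius bound. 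Even then, non-concordance on $\mathbb{C}\mathbb{P}^{n}$ does not formally pull back along $u$: one must show that the relevant class is not killed, which is the hard analytic/homotopy-theoretic content of \cite[\S 3]{FJ94} (compare the $KO$-theoretic detection via Adams--Walker \cite{AW65} reproduced in the proof of Theorem \ref{lemhit}). As written, your part (i) therefore does not close; the economical repair is to do what the paper does and quote \cite[Proposition 3.19]{FJ94} directly, reserving new arguments for the inputs the paper actually adds (Theorems \ref{condiff} and \ref{exotho}).
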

The proof of the above Theorem \ref{mainthe} follows from \cite[Corollary 3.14 and Proposition 3.19]{FJ94}. By using Theorem \ref{exotho} and Theorem \ref{mainthe}, we also construct in section 2 examples of closed negatively curved manifolds $M^{2n}$, where $n=7$ or $8$, which are homeomorphic but not diffeomorphic to complex hyperbolic manifolds, thereby giving a partial answer to a question raised by C.S. Aravinda and F.T. Farrell \cite{AF04}.\\

Another source for Farrell-Jones spheres is the class of so called Hitchin spheres. In \cite{Hit74}, Hitchin showed that if $\Sigma$ is a homotopy sphere with a metric of positive scalar curvature, then $\alpha(\Sigma)=0$, where $\alpha:\Omega_{*}^{spin}\to KO_{*}$ is the ring homomorphism which associates to a spin bordism class the $KO$-valued index of the Dirac operator of a representative spin manifold. The following definition can be found in \cite [Remark 3.4]{MA}:
\begin{definition}\label{HISP}\rm{
 An exotic sphere $\Sigma^{m}\in \Theta_{m}$~$(m\geq1)$ is called a Hitchin sphere if $\alpha(\Sigma^{m})\neq 0$.}
\end{definition}
We prove the following theorem in Section 3:
\begin{theorem}\label{lemhit}
Every Hitchin $(8n+2)$-sphere $(n \geq1)$ is a Farrell-Jones sphere.
\end{theorem}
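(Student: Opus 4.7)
The plan is to use the $\alpha$-invariant as a spin bordism obstruction, combined with the equivalence given by Theorem \ref{condiff}. Fix $n\geq1$ and set $m=4n+1$, so the dimension under consideration is $2m=8n+2$. A key arithmetic observation is that $\mathbb{C}\mathbb{P}^{m}$ is spin precisely when $m$ is odd, since $w_{2}(\mathbb{C}\mathbb{P}^{m})\equiv(m+1)x\pmod{2}$ where $x$ generates $H^{2}(\mathbb{C}\mathbb{P}^{m};\mathbb{Z}/2)$. As $m=4n+1$ is odd, $\mathbb{C}\mathbb{P}^{4n+1}$ carries a spin structure; being simply connected, this structure is unique. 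The Hitchin sphere $\Sigma^{8n+2}$ is likewise simply connected and so inherits a unique spin structure.

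By Theorem \ref{condiff}, it suffices to prove that $\mathbb{C}\mathbb{P}^{4n+1}\#\Sigma^{8n+2}$ is not diffeomorphic to $\mathbb{C}\mathbb{P}^{4n+1}$. The connected sum is simply connected and spin (with its unique spin structure), and a standard surgery argument on $S^{0}$ exhibits a spin cobordism between $\mathbb{C}\mathbb{P}^{4n+1}\sqcup\Sigma^{8n+2}$ and $\mathbb{C}\mathbb{P}^{4n+1}\#\Sigma^{8n+2}$, so addition in $\Omega_{*}^{spin}$ is realized by connected sum. Applying the ring homomorphism $\alpha\colon\Omega_{*}^{spin}\to KO_{*}$ therefore yields
\[
\alpha\bigl(\mathbb{C}\mathbb{P}^{4n+1}\#\Sigma^{8n+2}\bigr)\;=\;\alpha\bigl(\mathbb{C}\mathbb{P}^{4n+1}\bigr)+\alpha\bigl(\Sigma^{8n+2}\bigr)
\]
in $KO_{8n+2}$.

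Now suppose for contradiction that $\mathbb{C}\mathbb{P}^{4n+1}\#\Sigma^{8n+2}$ is diffeomorphic to $\mathbb{C}\mathbb{P}^{4n+1}$. Uniqueness of the spin structure on each side forces the diffeomorphism to be spin, so the two sides represent the same class in $\Omega_{8n+2}^{spin}$ and hence have equal $\alpha$-invariants. The displayed additivity then forces $\alpha(\Sigma^{8n+2})=0$, contradicting the defining property $\alpha(\Sigma^{8n+2})\neq 0$ of a Hitchin sphere. Thus $\mathbb{C}\mathbb{P}^{4n+1}\#\Sigma^{8n+2}$ is not diffeomorphic to $\mathbb{C}\mathbb{P}^{4n+1}$, and Theorem \ref{condiff} concludes that $\Sigma^{8n+2}$ is a Farrell-Jones sphere.

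The only real point requiring care is the additivity of $\alpha$ under connected sum, which I would record as a lemma or cite from the literature; everything else is bookkeeping about uniqueness of spin structures on simply connected manifolds and the parity condition singling out the spin $\mathbb{C}\mathbb{P}^{m}$'s. I expect this to be the main, though still routine, technical check.
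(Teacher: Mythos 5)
Your argument is correct, but it is genuinely different from the one in the paper. You prove non-diffeomorphism directly by bordism-theoretic means: $\mathbb{C}\mathbb{P}^{4n+1}$ is spin since $4n+1$ is odd, connected sum is spin-bordant to disjoint union, so $\alpha(\mathbb{C}\mathbb{P}^{4n+1}\#\Sigma^{8n+2})=\alpha(\mathbb{C}\mathbb{P}^{4n+1})+\alpha(\Sigma^{8n+2})\neq\alpha(\mathbb{C}\mathbb{P}^{4n+1})$ in $KO_{8n+2}\cong\mathbb{Z}_2$, and then Theorem \ref{condiff} converts non-diffeomorphism into non-concordance. (One small point you gloss over: a diffeomorphism need not preserve orientation, so a priori it only identifies the spin bordism classes up to sign; since $KO_{8n+2}$ is $2$-torsion this is harmless, but it deserves a sentence.) The paper instead works entirely inside smoothing theory: it identifies the concordance class $[\mathbb{C}\mathbb{P}^{4n+1}\#\Sigma]$ with $f^{*}_{\mathbb{C}\mathbb{P}^{4n+1}}([\Sigma])\in[\mathbb{C}\mathbb{P}^{4n+1},Top/O]$, transfers the problem to stable cohomotopy via the $H$-space maps $SF\to F/O\leftarrow Top/O$ (using Brumfiel's theorem that $\phi_{*}:[\mathbb{C}\mathbb{P}^{m},SF]\to[\mathbb{C}\mathbb{P}^{m},F/O]$ is monic), interprets $\alpha$ as the Adams $d_{\mathbb{R}}$-invariant, and invokes the Adams--Walker theorem that $\Sigma^{q}f_{\mathbb{C}\mathbb{P}^{4n+1}}$ is monic on $\widetilde{KO}^{q}$ to conclude the concordance class is nonzero. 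So the paper establishes non-concordance directly from the definition, without passing through Sullivan's theorem (which underlies Theorem \ref{condiff}), whereas your route trades that machinery for the elementary additivity of the $\alpha$-invariant plus Theorem \ref{condiff}; your argument is shorter and also shows more generally that no Hitchin sphere lies in the inertia group of any closed spin $(8n+2)$-manifold, while the paper's argument is the one that generalizes to the setting of Question \ref{raquestion}, where one wants to detect nontriviality of $f^{*}_{\mathbb{C}\mathbb{P}^{4n+1}}(\eta)$ for classes $\eta$ with $d_{\mathbb{R}}(\eta)=0$, a situation invisible to the $\alpha$-invariant.
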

Recall that the collection of homotopy spheres which admit an orientation preserving diffeomorphism $M\to M\#\Sigma$ form the inertia group of $M$, denoted by $I(M)$. There is a canonical topological identification $\iota : M\to M\#\Sigma$ which is the identity outside of the attaching region; the subset of the inertia group consisting of homotopy spheres that admit a diffeomorphism homotopic to $\iota$ is called the homotopy inertia group $I_h(M)$. Similarly, the concordance inertia group of $M^m$, $I_c(M^m)\subseteq \Theta_m$, consists of those homotopy spheres $\Sigma^m$ such that $M$ and $M\#\Sigma^m$ are concordant. By Theorem \ref{condiff}, we have that  $\Sigma^{2n}$ is a Farrell-Jones sphere iff $\Sigma^{2n}\notin I(\mathbb{C}\mathbb{P}^{n})$ iff $\Sigma^{2n}\notin I_{c}(\mathbb{C}\mathbb{P}^{n})$ 
iff $\Sigma^{2n}\notin I_{h}(\mathbb{C}\mathbb{P}^{n})$. In section 4, we discuss the group $I(\mathbb{C}\mathbb{P}^{4n+1})$.
\section{ Exotic Smooth Structures on Complex Hyperbolic Manifolds}
The negatively curved Riemannian symmetric spaces are of four types: $\mathbb{R}\textbf{H}^n$, $\mathbb{C}\textbf{H}^n$, $\mathbb{H}\textbf{H}^n$ and  $\mathbb{O}\textbf{H}^2$, where $\mathbb{R}$, $\mathbb{C}$, $\mathbb{H}$, $\mathbb{O}$ denote the real, complex, quaternion and Cayley numbers, i.e., the four division algebras $\mathbb{K}$ over the real numbers whose dimensions over $\mathbb{R}$ are $d = 1$, 2, 4 and 8 respectively. A Riemannian manifold $M^{dn}$ is called a real, complex, quarternionic or Cayley hyperbolic manifold provided its universal cover is isometric to $\mathbb{R}\textbf{H}^n$, $\mathbb{C}\textbf{H}^n$, $\mathbb{H}\textbf{H}^n$ and $\mathbb{O}\textbf{H}^2$, respectively. (Note that we need to consider only the cases $n\geq 2$ and when $\mathbb{K}= \mathbb{O}$, $n=2$.)\\
\indent In \cite[p. 2]{AF04}, C.S. Aravinda and F.T. Farrell ask the following:
\begin{question}\label{fjquo}
For each division algebra $\mathbb{K}$ over the reals and each integer $n\geq 2$\\ ($n = 2$ when $\mathbb{K}=\mathbb{O}$), does there exist a closed negatively curved Riemannian manifold $M^{dn}$(where $d=\dim_{\mathbb{R}} \mathbb{K}$) which is homeomorphic but not diffeomorphic to a $\mathbb{K}$-hyperbolic manifold.
\end{question}
\begin{remark}\rm
For $\mathbb{K}=\mathbb{R}$ and $n=2$, 3, this is impossible since homeomorphism implies diffeomorphism in these dimensions \cite{Moi77}.  Also when $\mathbb{K}=\mathbb{R}$, it was shown in \cite{FJO98} that the answer is yes provided $n\geq 6$. When $\mathbb{K}=\mathbb{C}$, it was shown in \cite{FJ94} that the answer is yes for $n=4m+1$ for any integer $m\geq 1$ and for $n=4$. When $\mathbb{K}=\mathbb{H}$, the answer is yes for $n = 2$, 4 and 5, see \cite{AF04}. The answer to this question is yes for $\mathbb{K}=\mathbb{O}$ by \cite{AF03} since only one dimension needs to be considered in this case. In this section, we consider the case $\mathbb{K}=\mathbb{C}$ and show that the answer is yes for $n=7,~8$.
\end{remark}
\indent Since Borel \cite{Bor63} has constructed closed complex hyperbolic manifolds in every complex dimension $m\geq 1$ and by Theorem \ref{exotho} and Theorem \ref{mainthe}, we have the following result :
\begin{theorem}\label{conmainthe}
Let $n$ be either 7 or 8. Given any positive number $\epsilon\in \mathbb{R}$, there exists a pair of closed negatively curved Riemannian manifolds $M$ and $N$ having the following properties:
\begin{itemize}
\item[\rm{(i)}] $M$ is a complex $n$-dimensional hyperbolic manifold.
 \item[\rm{(ii)}] The sectional curvatures of $N$ are all in the interval $[-4 -\epsilon, -1 +\epsilon]$.
 \item[\rm{(iii)}] The manifolds $M$ and $N$ are homeomorphic but not diffeomorphic.
 \end{itemize}
\end{theorem}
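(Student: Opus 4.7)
The plan is to string together the three ingredients that have just been flagged in the paragraph preceding the theorem: Borel's construction of closed complex hyperbolic manifolds in every complex dimension, Theorem \ref{exotho} (identifying the non-zero element of $\Theta_{2n}\cong\mathbb{Z}_2$ as a Farrell-Jones sphere when $n=7,8$), and Theorem \ref{mainthe} (producing, for any complex hyperbolic $M_0^{2n}$ and any Farrell-Jones sphere $\Sigma^{2n}$, a finite cover whose connected sum with $\Sigma$ has the desired pinched curvature and is not diffeomorphic to its unexotic cousin). Since those three results do essentially all the work, the proof should be almost purely assembly.

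Concretely, I would fix $n\in\{7,8\}$ and a positive real $\epsilon$. By Borel \cite{Bor63}, choose a closed complex $n$-dimensional hyperbolic manifold $M_0^{2n}$. By Theorem \ref{exotho}, the non-trivial element $\Sigma^{2n}$ of $\Theta_{2n}\cong \mathbb{Z}_2$ is a Farrell-Jones sphere. Feed $M_0^{2n}$, $\Sigma^{2n}$, and $\epsilon$ into Theorem \ref{mainthe} to obtain the finite sheeted cover $\mathcal{N}^{2n}$ of $M_0^{2n}$ supplied by that theorem. I then set $M := \mathcal{N}^{2n}$ and $N := \mathcal{N}^{2n}\#\Sigma^{2n}$.

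It remains to verify (i)--(iii). For (i): any finite cover of a closed complex hyperbolic manifold is again a closed complex hyperbolic manifold (the complex hyperbolic metric lifts to the cover), so $M=\mathcal{N}^{2n}$ qualifies. For (ii): applying Theorem \ref{mainthe}(ii) with the cover $\mathcal{N}^{2n}$ playing the role of $N^{2n}$ in that statement gives a negatively curved Riemannian metric on $N=\mathcal{N}^{2n}\#\Sigma^{2n}$ whose sectional curvatures lie in $[-4-\epsilon,-1+\epsilon]$. For (iii): since $\dim \Sigma^{2n}\geq 5$, the generalized Poincar\'e conjecture implies $\Sigma^{2n}$ is homeomorphic to $S^{2n}$, so the canonical identification $M\to M\#\Sigma^{2n}=N$ is a homeomorphism; while Theorem \ref{mainthe}(i), again with $\mathcal{N}^{2n}$ in the role of $N^{2n}$, asserts that $M=\mathcal{N}^{2n}$ is not diffeomorphic to $\mathcal{N}^{2n}\#\Sigma^{2n}=N$.

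There is essentially no obstacle: the substantive content (the existence of a suitable finite cover on which the Farrell--Jones metric-deformation machinery yields both the curvature pinching and the non-diffeomorphism) has been quarantined inside Theorem \ref{mainthe}, and the identification of $\Sigma^{2n}$ as a Farrell-Jones sphere is precisely Theorem \ref{exotho}. The only mildly subtle bookkeeping is to keep straight that $M$ and $N$ in Theorem \ref{conmainthe} correspond to the cover $\mathcal{N}^{2n}$ (rather than to $M_0^{2n}$) and its exotic connected sum, so that both conclusions of Theorem \ref{mainthe} apply simultaneously to the same manifold.
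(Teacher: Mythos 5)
Your proposal is correct and follows exactly the route the paper intends: the paper's "proof" is just the one-line assembly of Borel's existence result, Theorem \ref{exotho}, and Theorem \ref{mainthe}, with $M=\mathcal{N}^{2n}$ and $N=\mathcal{N}^{2n}\#\Sigma^{2n}$ (taking the trivial cover in Theorem \ref{mainthe}), which is precisely what you do. Your verification of (i)--(iii), including the homeomorphism via the topological triviality of $\Sigma^{2n}$, just makes explicit what the paper leaves implicit.
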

\section{ Farrell-Jones Spheres and Hitchin Sphere}
In this section, we give proofs of the Theorems \ref{condiff}, \ref{exotho} and \ref{lemhit}.
\begin{definition}\rm
Let $M$ be a topological manifold. Let $(N,f)$ be a pair consisting of a smooth manifold $N$ together with a homeomorphism $f:N\to M$. Two such pairs $(N_{1},f_{1})$ and $(N_{2},f_{2})$ are concordant provided there exists a diffeomorphism $g:N_{1}\to N_{2}$ such that the composition $f_{2}\circ g$ is topologically concordant to $f_{1}$, i.e., there exists a homeomorphism $F: N_{1}\times [0,1]\to M\times [0,1]$ such that $F_{|N_{1}\times 0}=f_{1}$ and $F_{|N_{1}\times 1}=f_{2}\circ g$. The set of all such concordance classes is denoted by $\mathcal{C}(M)$.
\end{definition}
\indent We recall some terminology from \cite{KM63} :
\begin{definition}\rm{
\begin{itemize}
\item[(a)] A homotopy $m$-sphere $\Sigma^m$ is an oriented smooth closed manifold homotopy equivalent to $\mathbb{S}^m$.
\item[(b)]A homotopy $m$-sphere $\Sigma^m$ is said to be exotic if it is not diffeomorphic to $\mathbb{S}^m$.
\item[(c)] Two homotopy $m$-spheres $\Sigma^{m}_{1}$ and $\Sigma^{m}_{2}$ are said to be equivalent if there exists an orientation preserving diffeomorphism $f:\Sigma^{m}_{1}\to \Sigma^{m}_{2}$.
\end{itemize}
The set of equivalence classes of homotopy $m$-spheres is denoted by $\Theta_m$. The equivalence class of $\Sigma^m$ is denoted by [$\Sigma^m$]. When $m\geq 5$, $\Theta_m$ forms an abelian group with group operation given by the connected sum $\#$ and the zero element represented by the equivalence class  of the round sphere $\mathbb{S}^m$. M. Kervaire and J. Milnor \cite{KM63} showed that each  $\Theta_m$ is a finite group; in particular,  $\Theta_{8}$, $\Theta_{14}$ and  $\Theta_{16}$ are cyclic groups of order $2$, $\Theta_{10}$ and $\Theta_{20}$ are cyclic groups of order 6 and 24 respectively and $\Theta_{18}$ is a group of order 16.}
\end{definition}
\indent Start by noting that there is a homeomorphism $h: M^n\#\Sigma^n \to M^n$ $(n\geq5)$ which is the inclusion map outside of homotopy sphere $\Sigma^n$ and well defined up to topological concordance. We will denote the class of $(M^n\#\Sigma^n, h)$  in $\mathcal{C}(M)$  by $[M^n\#\Sigma^n]$. (Note that $[M^n\#\mathbb{S}^n]$ is the class of $(M^n, id_{M^n})$.) Let $f_{M}:M^n\to \mathbb{S}^n $  be a degree one map. Note that $f_{M}$ is well-defined up to homotopy. Composition with $f_{M}$ defines a homomorphism
$$f_{M}^*:[\mathbb{S}^n, Top/O]\to [M^n ,Top/O],$$ and in terms of the identifications
\begin{center}
$\Theta_n=[\mathbb{S}^n, Top/O]$ and $\mathcal{C}(M^n)=[M^n ,Top/O]$
\end{center}
given by \cite[p. 25 and 194]{KS77}, $f_{M}^*$ becomes $[\Sigma^m]\mapsto [M^m\#\Sigma^m]$.
\begin{definition}
If $M$ is homotopy equivalent to $\mathbb{C}\mathbb{P}^n$, we will call a generator of $H^2(M; \mathbb{Z})$ a c-orientation of $M$.
\end{definition}
\indent Hereafter $g$ denotes the conjugation map $$(z_0,z_1,z_2,z_3,z_4,...,z_n)\mapsto (\bar{z}_0,\bar{z}_1,\bar{z}_{2},\bar{z}_{3},\bar{z}_{4},...,\bar{z}_n)$$ (the complex conjugation) induces the diffeomorphism $g:\mathbb{C}\mathbb{P}^{n} \to \mathbb{C}\mathbb{P}^{n}$ such that $g^*(c_1)=-c_1$ where $c_1$ is the $c$-orientation of $\mathbb{C}\mathbb{P}^{n}$.\\
\paragraph{Proof of Theorem \ref{condiff}:}
Assume that $\Sigma^{2n}$ is a Farrell-Jones sphere. Suppose $\mathbb{C}\mathbb{P}^{n}\#\Sigma^{2n}$ and $\mathbb{C}\mathbb{P}^{n}$ are diffeomorphic. If $f: \mathbb{C}\mathbb{P}^{n}\#\Sigma^{2n}\to \mathbb{C}\mathbb{P}^{n}$ is a diffeomorphism, then $f$ induces an isomorphism on cohomology $$f^*:H^{*}(\mathbb{C}\mathbb{P}^{n},\mathbb{Z})\to H^{*}(\mathbb{C}\mathbb{P}^{n}\#\Sigma^{2n},\mathbb{Z})$$
such that $f^*(c_1)=\pm c_2$, where $c_1$ and $c_2$ are  $c$-orientation of $\mathbb{C}\mathbb{P}^{n}$ and $\mathbb{C}\mathbb{P}^{n}\#\Sigma^{2n}$ respectively. If $f^*(c_1)=c_2$, then $f$ is a c-orientation preserving diffeomorphism. If $f^*(c_1)=-c_2$, then $g\circ f$ is a c-orientation preserving diffeomorphism, where  $g:\mathbb{C}\mathbb{P}^{n} \to \mathbb{C}\mathbb{P}^{n}$ is the conjugation map. In both cases, we have that $\mathbb{C}\mathbb{P}^{n}\#\Sigma^{2n}$ is c-orientation diffeomorphic to $\mathbb{C}\mathbb{P}^{n}$. By \cite[Corollary 3, p. 97]{Sul67}, $\mathbb{C}\mathbb{P}^{n}\#\Sigma^{2n}$ is concordant to $\mathbb{C}\mathbb{P}^{n}$. This is a contradiction since $\Sigma^{2n}$ is a Farrell-Jones sphere. Thus $\mathbb{C}\mathbb{P}^{n}\#\Sigma^{2n}$ and $\mathbb{C}\mathbb{P}^{n}$ are not diffeomorphic. Conversely, suppose $\mathbb{C}\mathbb{P}^{n}\#\Sigma^{2n}$ and $\mathbb{C}\mathbb{P}^{n}$ are not diffeomorphic. Then, by \cite[Corollary 
3, p. 97]{Sul67}, $\mathbb{C}\mathbb{P}^{n}\#\Sigma^{2n}$ is not concordant to $\mathbb{C}\mathbb{P}^{n}$. This shows that $\Sigma^{2n}$ is a Farrell-Jones sphere. This completes the proof of Theorem \ref{condiff}. ~~~~~~~~~~~~~~~~~~~~~~~~~~~~~~~~~~~~~~~~~~~~~~~~~~~~~~~~~~~~~~~~~~~~~~~~~~~~~~~~~  $\Box$
\paragraph{Proof of Theorem \ref{exotho}:}
Let $\Sigma^{2n}$ be the generator of $\Theta_{2n}$ $({\rm{with}} n=7~or~8)$. Suppose $\Sigma^{2n}$ is not a Farrell-Jones sphere. Then $\mathbb{C}\mathbb{P}^{n}\#\Sigma^{2n}$ is concordant to $\mathbb{C}\mathbb{P}^{n}$. By \cite[Corollary 3, p. 97]{Sul67}, $\mathbb{C}\mathbb{P}^{n}\#\Sigma^{2n}$ is c-orientation diffeomorphic to $\mathbb{C}\mathbb{P}^{n}$. Let $f: \mathbb{C}\mathbb{P}^{n}\#\Sigma^{2n}\to \mathbb{C}\mathbb{P}^{n}$ be a c-orientation diffeomorphism such that $f^*(c_1)=c_2$, where $c_1$ and $c_2$ are  $c$-orientation of $\mathbb{C}\mathbb{P}^{n}$ and $\mathbb{C}\mathbb{P}^{n}\#\Sigma^{2n}$ respectively. Using properties of the cup product, we have that $f^*(c_1^n)=c_2^n$. If $c_1=c_2$ in  $H^{2}(\mathbb{C}\mathbb{P}^{n},\mathbb{Z})$, then $f$ is an orientation preserving diffeomorphism. If $c_1\neq c_2$ in $H^{2}(\mathbb{C}\mathbb{P}^{n},\mathbb{Z})$, then $g\circ f$ is an orientation preserving diffeomorphism with the property that $(g\circ f)^*(c_1)=f^*(g^*(c_1))=-c_2=c_1,$, where  $g:\mathbb{C}\mathbb{P}^{n} \to \mathbb{C}\mathbb{P}^{n}$ is the conjugation map. In both cases, we have that $\mathbb{C}\mathbb{P}^{n}\#\Sigma^{2n}$ is an orientation preserving diffeomorphic to $\mathbb{C}\mathbb{P}^{n}$. This is a contradiction because, by \cite[Theorem 1]{Kaw68}, $\mathbb{C}\mathbb{P}^{n}\#\Sigma^{2n}$ can not be  orientation preserving diffeomorphic to $\mathbb{C}\mathbb{P}^{n}$. Thus $\Sigma^{2n}$ is a Farrell-Jones sphere. This completes the proof of Theorem \ref{exotho}. ~~~~~~~~~~~~~~~~~~~~~~~~~~~~~~~~~~~~~~~~~~~~~~~~~~~~~~$\Box$
\indent Recall that the $\alpha$-invariant is the ring homomorphism $\alpha:\Omega_{*}^{spin}\to KO_{*}$ which associates to a spin bordism class the $KO$-valued index of the Dirac operator of a representative spin manifold. We also write $\alpha$ for the corresponding invariant on framed bordism :
\begin{center}
 $\alpha :\Omega_{*}^{f}\to \Omega_{*}^{spin}\to KO_{*}$
\end{center}
Under the Pontryagin-Thom isomorphism $\Omega_{*}^{f}\cong \pi_{*}^{s}$, the $\alpha$-invariant has the following interpretation as Adams $d$-invariant $d_{\mathbb{R}} : \pi_{r}^{s}\to KO_{*}$, which was used already in \cite[p. 44]{Hit74} and \cite[Lemma 2.12]{CS12}.
\begin{lemma}\label{adaminva}
 Under the Pontryagin-Thom isomorphism $\Omega_{*}^{f}\cong\pi_{*}^{s}$, the $\alpha$-invariant $\alpha:\Omega_{8n+2}^{f}\to KO_{8n+2}$
 may be identified with $d_{\mathbb{R}} : \pi_{8n+2}^{s}\to KO_{8n+2}$.
\end{lemma}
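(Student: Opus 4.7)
The plan is to realize both the $\alpha$-invariant and Adams' $d_{\mathbb{R}}$-invariant as the homomorphism on homotopy groups induced by a single map of spectra, namely the unit $\mathbb{S}\to KO$ of the ring spectrum $KO$. First, I would recall that under the Pontryagin-Thom isomorphism $\Omega_{*}^{f}\cong\pi_{*}(\mathbb{S})=\pi_{*}^{s}$, the forgetful homomorphism $\Omega_{*}^{f}\to\Omega_{*}^{spin}$ is induced on homotopy by the unit map of ring spectra $\mathbb{S}\to MSpin$; this is the naturality of Thom--Pontryagin applied to the map from the framed cobordism spectrum (which is $\mathbb{S}$) into the spin Thom spectrum $MSpin$.

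Next, I would invoke the Atiyah--Bott--Shapiro $KO$-orientation as a ring spectrum map $MSpin\to KO$, which by Atiyah--Singer induces on $\pi_{*}(MSpin)=\Omega_{*}^{spin}$ exactly the $KO$-valued index of the Dirac operator; this is the second factor in the definition of $\alpha$. Composing the two spectrum-level maps yields $\mathbb{S}\to MSpin\to KO$, which by uniqueness of the unit is simply the unit map $\mathbb{S}\to KO$. The homomorphism on homotopy groups $\pi_{*}^{s}=\pi_{*}(\mathbb{S})\to\pi_{*}(KO)=KO_{*}$ induced by this unit is precisely the $KO$-Hurewicz map, which is Adams' definition of $d_{\mathbb{R}}$. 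Specializing to degree $8n+2$ then gives the lemma.

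The argument is purely formal once the three classical inputs are assembled, so the main obstacle is really bookkeeping rather than substance: one must verify that the spectrum-level identifications of (a) the forgetful bordism map as a unit map of Thom spectra, (b) the Atiyah--Bott--Shapiro orientation as the Dirac index on $\pi_{*}(MSpin)$, and (c) Adams' $d_{\mathbb{R}}$ as the $KO$-Hurewicz map, all fit into a single commutative diagram, with no sign or normalization ambiguity in dimension $8n+2$. Once the diagram is drawn, the lemma is immediate, and one can invoke \cite{Hit74} and \cite{CS12} for the requisite spectrum-level facts.
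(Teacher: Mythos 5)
Your argument is correct, and it is essentially the same one underlying the paper's treatment: the paper does not reprove the lemma but simply cites \cite[p.~44]{Hit74} and \cite[Lemma 2.12]{CS12}, where the identification is obtained exactly as you describe, by factoring the unit $\mathbb{S}\to KO$ through $MSpin$ via the Atiyah--Bott--Shapiro orientation and recognizing Adams' $d_{\mathbb{R}}$ as the map induced on homotopy by that unit (equivalently, the induced homomorphism $h^*$ on $\widetilde{KO}$ of a representing map of spheres). No gap; your ``bookkeeping'' diagram is precisely the content of the cited references.
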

We start by recalling some facts from smoothing theory \cite{Bru71a}, which were used already in \cite[Lemma 3.17]{FJ94}. There are $H$-spaces $SF$, $F/O$ and $Top/O$ and $H$-space maps $\phi:SF\to F/O$,  $\psi: Top/O\to F/O$ such that
\begin{equation}\label{eq1.bru}
\psi_{*}:\Theta_{8n+2}=\pi_{8n+2}(Top/O)\to \pi_{8n+2}(F/O)
\end{equation}
is an isomorphism for $n\geq 1$. The homotopy groups of $SF$ are the stable homotopy groups of spheres $\pi^{s}_m$ ; i.e., $\pi_{m}(SF)=\pi^{s}_m$ for $m\geq 1$. For $n\geq 1$,
\begin{equation}\label{eq2.bru}
 \phi_{*}:\pi^{s}_{8n+2}\to \pi_{8n+2}(F/O)
\end{equation}
is an isomorphism. Since every homotopy sphere has a unique spin-structure, we obtain the $\alpha$-invariant on $\pi^{s}_{8n+2} \cong \pi_{8n+2}(F/O) \cong \Theta_{8n+2}$ :
$$\alpha : \pi^{s}_{8n+2}\stackrel{\phi_{*}}{\rightarrow} \pi_{8n+2}(F/O) \stackrel{\psi_{*}^{-1}}{\rightarrow}\Theta_{8n+2}\to \Omega_{8n+2}^{spin}\to KO_{8n+2},$$ where $\psi_{*}$ and $\phi_{*}$ are the isomorphisms as in Equation (\ref{eq1.bru}) and (\ref{eq2.bru}) respectively.\\
\indent Let $\rm{Ker}(d_{\mathbb{R}})$ denotes the kernel of the Adams $d$-invariant $d_{\mathbb{R}} : \pi_{8n+2}^{s}\to \mathbb{Z}_2$. By Lemma \ref{adaminva}, $\rm{Ker}(d_{\mathbb{R}})$ consists of framed manifolds which bound spin manifolds.\\
\paragraph{Proof of Theorem \ref{lemhit}:}
Consider the following commutative of diagram :
\begin{equation}\label{digram1}
\begin{CD}
[\mathbb{S}^{2m},Top/O]=\Theta_{2m}@>f^*_{\mathbb{C}\mathbb{P}^{m}}>> [\mathbb{C}\mathbb{P}^{m},Top/O]=\mathcal{C}(\mathbb{C}\mathbb{P}^{m})\\
@VV\psi_{*}V                      @VV\psi_{*}V\\
[\mathbb{S}^{2m},F/O]  @>f^*_{\mathbb{C}\mathbb{P}^{m}}>> [\mathbb{C}\mathbb{P}^{m},F/O]\\
@AA\phi_{*}A                      @AA\phi_{*}A\\
[\mathbb{S}^{2m},SF]=\pi_{2m}^{s} @>f^*_{\mathbb{C}\mathbb{P}^{m}}>>  [\mathbb{C}\mathbb{P}^{m},SF]
\end{CD}
\end{equation}
In this diagram, $\phi_{*}$ and $\psi_{*}$ are induced by the $H$-space maps $\phi:SF\to F/O$,  $\psi: Top/O\to F/O$ respectively and the homomorphism $$\phi_{*}:[\mathbb{C}\mathbb{P}^m,SF]\to [\mathbb{C}\mathbb{P}^m,F/O]$$ is monic for all $m\geq 1$ by a result of Brumfiel \cite[p. 77]{Bru71}. Recall that the concordance class $[\mathbb{C}\mathbb{P}^m\#\Sigma]\in [\mathbb{C}\mathbb{P}^m,Top/O]$ of $\mathbb{C}\mathbb{P}^m\#\Sigma$ is $f^{*}_{\mathbb{C}\mathbb{P}^m}([\Sigma])$ when $m > 2$, and that $[\mathbb{C}\mathbb{P}^m]=[\mathbb{C}\mathbb{P}^m\#\mathbb{S}^{2m}]$ is the zero element of this group.\\
Let $\Sigma^{8n+2}\in \Theta_{8n+2}$ be a Hitchin $(8n+2)$-sphere $({\rm{with}} n\geq 1)$ and further let $\eta \in \pi^{s}_{8n+2}=[\mathbb{S}^{8n+2},SF]$ be such that $$\psi_{*}^{-1}(\phi_{*}(\eta))=\Sigma^{8n+2}.$$ Recall that $[X,~SF]$ can be identified with the $0^{th}$ stable cohomotopy group $\pi^0(X)$. Let $h:\mathbb{S}^{q+8n+2}\to \mathbb{S}^{q}$ represent $\eta$. Since $\Sigma^{8n+2}$ is a Hitchin sphere and by Lemma \ref{adaminva}, we have $$0 \neq \alpha(\Sigma^{8n+2})=d_{\mathbb{R}}(h)=h^*\in {\rm Hom}(\widetilde{KO}^q(\mathbb{S}^{q}),\widetilde{KO}^q(\mathbb{S}^{q+8n+2})).$$ Also Adams and Walker \cite{AW65} showed that $\Sigma^{q}f_{\mathbb{C}\mathbb{P}^{4n+1}}:\Sigma^{q}\mathbb{C}\mathbb{P}^{4n+1}\to \mathbb{S}^{q+8n+2}$ induces a monomorphism on $\widetilde{KO}^q(.)$. Consequently the composite map
\begin{equation*}\label{eq8.bru}
h\circ \Sigma^{q}f_{\mathbb{C}\mathbb{P}^{4n+1}}: \Sigma^{q}\mathbb{C}\mathbb{P}^{4n+1}\to \mathbb{S}^{q}
\end{equation*}
induces a non-zero homomorphism on $\widetilde{KO}^q(.)$.  This shows that $$f^{*}_{\mathbb{C}\mathbb{P}^{4n+1}}(\eta)=[h\circ \Sigma^{q}f_{\mathbb{C}\mathbb{P}^{4n+1}}]\neq 0,$$ where $$f^{*}_{\mathbb{C}\mathbb{P}^{4n+1}}: [\mathbb{S}^{8n+2},SF]\to [\mathbb{C}\mathbb{P}^{4n+1},SF].$$
Since the homomorphism $\phi_{*}:[\mathbb{C}\mathbb{P}^m,SF]\to [\mathbb{C}\mathbb{P}^m,F/O]$ is monic, by the commutative diagram (\ref{digram1}) where $m=4n+1$, we have $$\psi_{*}( f^*_{\mathbb{C}\mathbb{P}^{4n+1}}(\Sigma^{8n+2}))=\phi_{*}(f^{*}_{\mathbb{C}\mathbb{P}^{4n+1}}(\eta))\neq 0.$$ This implies that $$f^*_{\mathbb{C}\mathbb{P}^{4n+1}}(\Sigma^{8n+2})\neq0$$ and hence $\mathbb{C}\mathbb{P}^{4n+1}\#\Sigma^{8n+2}$ is not concordant to $\mathbb{C}\mathbb{P}^{4n+1}$. This shows that $\Sigma^{8n+2}$ is a Farrell-Jones sphere and this completes the proof of Theorem \ref{lemhit}. ~~~~~~~~~~~~~~~~~~~~~~~~~~~~~~~~~~~~~~~~~~~~~~~~~~~~~~~~~~~~~~~~~~~~~~~~~~~~~~~~~~~~~~~~~~~   $\Box$\\~\\
\begin{remark}\label{rem.ander}\rm{
 \indent
\begin{itemize}
\item[\bf{1.}] Let us note that the homotopy sphere $\Sigma^{8n+2}$ $(n\geq 1)$ given by \cite[Lemma 3.17]{FJ94} is the image of Adams element $\mu_{8n+2}$ of order 2 under the composed isomorphism $\psi_{*}^{-1}\circ \phi_{*}$,  where $\psi_{*}$ and $\phi_{*}$ are the isomorphisms as in Equation (\ref{eq1.bru}) and (\ref{eq2.bru}) respectively (see \cite[Equation (3.17.4)]{FJ94}). By \cite[Theorem 1.2]{Ada66} and Lemma \ref{adaminva}, we have $$d_{\mathbb{R}}(\mu_{8n+2})=\alpha(\Sigma^{8n+2})=1.$$  This shows that  $\Sigma^{8n+2}$ is a Hitchin sphere of order 2 in $\Theta_{8n+2}$. By Theorem \ref{lemhit}, $\Sigma^{8n+2}$ is a Farrell-Jones sphere.
\item[\bf{2.}] Since $\Theta_{18}\cong \rm{Ker}(\alpha)\oplus \mathbb{Z}_2$, where the $\alpha$-invariant $\alpha: \Theta_{18}\to \mathbb{Z}_2$ satisfies $\rm{Ker}(\alpha)=\mathbb{Z}_8$ (see \cite[p. 12]{CS12}). This shows that there are exotic spheres of order $\neq 2$ in $\Theta_{18}$ which are not in the kernel of $\alpha$. This implies that there are Hitchin spheres of order $\neq 2$ in $\Theta_{18}$ which are all Farrell-Jones sphere by Theorem \ref{lemhit}.
\item[\bf{3.}] In \cite{ABP67}, Anderson, Brown and Peterson proved that one has $\alpha(\Sigma^m)\neq 0$ iff $m=8k+1$ or $8k+2$ iff $\Sigma^m$ is an exotic sphere not bounding a spin manifold, where $\alpha:\Theta_m\to \Omega_m^{spin}\to KO_{m}$ is the $\alpha$-invariant. This implies that $\Sigma^m$ is a Hitchin sphere in $\Theta_m$ iff $\Sigma^m$ is an exotic sphere not bounding a spin manifold. By Theorem \ref{lemhit}, every exotic sphere not bounding a spin manifold $\Sigma^{8n+2}$ in $\Theta_{8n+2}$ is a Farrell-Jones sphere.
\item [\bf{4.}] By \cite[Theorem 7.2]{Ada66}, $\Theta_{10}\cong \rm{Ker}(d_{\mathbb{R}})\oplus \mathbb{Z}_2$ such that $\rm{Ker}(d_{\mathbb{R}})=\mathbb{Z}_3$. If $\Sigma^{10}$ is a generator of $\rm{Ker}(d_{\mathbb{R}})$, then $d_{\mathbb{R}}(\Sigma^{10})=\alpha(\Sigma^{10})=0$. This shows that $\Sigma^{10}$ is not a Hitchin sphere. But, by \cite[Lemma 3.17]{FJ94}, $\Sigma^{10}$ is a Farrell-Jones sphere.
\end{itemize}}
\end{remark}
\section{ The Inertia Groups of Complex Projective Spaces}
In this section, we discuss the relationship between inertia groups of $\mathbb{C}\mathbb{P}^n$ and Farrell-Jones spheres.
\begin{definition}\rm{
Let $M^m$ be a closed smooth, oriented $m$-dimensional manifold. Let $\Sigma\in \Theta_{m}$ and let $g:\mathbb{S}^{m-1}\to \mathbb{S}^{m-1}$ be an orientation preserving diffeomorphism corresponding to $\Sigma$. Writing $M\#\Sigma$ as $(M^m\setminus \rm{int}(\mathbb{D}^m))\cup_{g}\mathbb{D}^m$, let $\iota : M\to M\#\Sigma$ denote the PL homeomorphism defined by $\iota_{|M\setminus \rm{int}(\mathbb{D}^m)}=id$ and $\iota_{|\mathbb{D}^m}=Cg$, where $Cg:\mathbb{D}^m\to \mathbb{D}^m$ is the cone extension of $g$.\\
The inertia group $I(M)\subset \Theta_{m}$ is defined as the set of $\Sigma \in \Theta_{m}$ for which there exists an orientation preserving diffeomorphism $\phi :M\to M\#\Sigma$.\\
Define the homotopy inertia group $I_h(M)$ to be the set of all $\Sigma\in I(M)$ such that there exists a diffeomorphism $M\to M\#\Sigma$ which is homotopic to $\iota$.\\
Define the concordance inertia group $I_c(M)$ to be the set of all $\Sigma\in I_h(M)$ such that $M\#\Sigma$ is concordant to $M$. Clearly, $I_c(M)\subseteq I_h(M)\subseteq I(M)$. Note that for $M=\mathbb{C}\mathbb{P}^{n}$, Theorem \ref{condiff} can be restated as : }
\end{definition}
\begin{theorem}\label{con-iner}
A sphere $\Sigma^{2n}\in \Theta_{2n}$ is a Farrell-Jones sphere iff $\Sigma^{2n}\notin I(\mathbb{C}\mathbb{P}^{n})$.
\end{theorem}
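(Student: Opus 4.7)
The statement is essentially a direct restatement of Theorem \ref{condiff}, as the excerpt itself suggests. The only substantive thing to verify is that the words ``diffeomorphic'' (used in Theorem \ref{condiff}) and ``orientation-preserving diffeomorphic'' (used in the definition of $I(\mathbb{C}\mathbb{P}^n)$) amount to the same condition here. The plan is therefore to stitch together three equivalences and invoke Theorem \ref{condiff} as a black box.

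First I would handle the easy direction: assume $\Sigma^{2n}$ is a Farrell-Jones sphere. Then by Theorem \ref{condiff} there is no diffeomorphism at all between $\mathbb{C}\mathbb{P}^n\#\Sigma^{2n}$ and $\mathbb{C}\mathbb{P}^n$, so in particular there is no orientation-preserving diffeomorphism $\mathbb{C}\mathbb{P}^n\to\mathbb{C}\mathbb{P}^n\#\Sigma^{2n}$, hence $\Sigma^{2n}\notin I(\mathbb{C}\mathbb{P}^n)$.

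For the converse I would argue contrapositively: assume $\Sigma^{2n}$ is not Farrell-Jones, so by definition $\mathbb{C}\mathbb{P}^n\#\Sigma^{2n}$ is concordant to $\mathbb{C}\mathbb{P}^n$. Invoking \cite[Corollary 3, p.~97]{Sul67} (exactly as in the proof of Theorem \ref{condiff}), concordance upgrades to a $c$-orientation preserving diffeomorphism $f:\mathbb{C}\mathbb{P}^n\#\Sigma^{2n}\to\mathbb{C}\mathbb{P}^n$, i.e.\ $f^{*}(c_1)=c_2$. Taking the $n$-th cup power gives $f^{*}(c_1^n)=c_2^n$, and since on both sides the fundamental class is characterized by pairing to $+1$ with the top power of the $c$-orientation, $f$ is orientation-preserving. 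The inverse $f^{-1}:\mathbb{C}\mathbb{P}^n\to\mathbb{C}\mathbb{P}^n\#\Sigma^{2n}$ is then an orientation-preserving diffeomorphism, so $\Sigma^{2n}\in I(\mathbb{C}\mathbb{P}^n)$, contradicting our hypothesis.

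There is essentially no obstacle here; the only subtle point is the passage from $c$-orientation to orientation in the converse, which is purely cohomological once Sullivan's corollary is in hand. All of the real work--the interplay of the conjugation $g$, Sullivan's rigidity of concordance classes over $\mathbb{C}\mathbb{P}^n$, and Kawakubo's obstruction--has already been done inside the proofs of Theorems \ref{condiff} and \ref{exotho}, so the present theorem is simply a repackaging in the language of inertia groups.
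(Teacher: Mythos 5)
Your proposal is correct and takes essentially the paper's route: the paper presents this theorem as a direct restatement of Theorem \ref{condiff}, and your two directions (Farrell--Jones $\Rightarrow$ no diffeomorphism at all $\Rightarrow$ $\Sigma^{2n}\notin I(\mathbb{C}\mathbb{P}^{n})$; not Farrell--Jones $\Rightarrow$ concordant $\Rightarrow$ $c$-orientation preserving, hence orientation-preserving, diffeomorphism via Sullivan's corollary and the cup-power argument) are exactly the ingredients already used in the paper's proofs of Theorems \ref{condiff} and \ref{exotho}. The one point you spell out that the paper leaves implicit---upgrading an arbitrary diffeomorphism to an orientation-preserving one---is handled by you in the same way the paper does it inside the proof of Theorem \ref{exotho}.
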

\begin{remark}\rm
 Since $I_c(\mathbb{C}\mathbb{P}^{n})\subseteq I_h(\mathbb{C}\mathbb{P}^{n})\subseteq I(\mathbb{C}\mathbb{P}^{n})$ and by the above Theorem \ref{con-iner}, we have that $$I_c(\mathbb{C}\mathbb{P}^{n})=I_h(\mathbb{C}\mathbb{P}^{n})=I(\mathbb{C}\mathbb{P}^{n}).$$
\end{remark}
The proof of theorem \ref{lemhit} leads one to the following question:
\begin{question}\label{raquestion}
 Let $f:\mathbb{C}\mathbb{P}^{4n+1}\to \mathbb{S}^{8n+2}$ be any degree one map $(n\geq 1)$.\\
 Does there exist an element $\eta\in \rm{Ker}(d_{\mathbb{R}})\subset \pi^s_{8n+2}=\Theta_{8n+2}$ such that the following is true :
 \begin{itemize}
\item [$(\star)$] If any map $h:\mathbb{S}^{q+8n+2}\to \mathbb{S}^{q}$ represents $\eta$, then $$h\circ \Sigma^{q}f : \Sigma^{q}\mathbb{C}\mathbb{P}^{4n+1}\to \mathbb{S}^{q}$$ is not null homotopic.
 \end{itemize}
\end{question}
\begin{remark}\label{rem1}\rm{
\indent
 \begin{itemize}
\item [\bf{1.}] By \cite[Lemma 9.1]{Kaw69}, $I(\mathbb{C}\mathbb{P}^{4n+1})\subseteq\rm{Ker}(d_{\mathbb{R}})$. If the answer to Question \ref{raquestion} is yes, then we have $I(\mathbb{C}\mathbb{P}^{4n+1})\neq \rm{Ker}(d_{\mathbb{R}})$,~ i.e., there exists an exotic sphere $\Sigma$ bounding spin manifold in $\Theta_{8n+2}$ such that $\Sigma \notin I(\mathbb{C}\mathbb{P}^{4n+1})$. This can be seen as follows :
Let $\eta\in \rm{Ker}(d_{\mathbb{R}})$ and let $h:\mathbb{S}^{q+8n+2}\to \mathbb{S}^{q}$ represent $\eta$ such that $h\circ \Sigma^{q}f : \Sigma^{q}\mathbb{C}\mathbb{P}^{4n+1}\to \mathbb{S}^{q}$ is not null homotopic. This implies that $$f^*_{\mathbb{C}\mathbb{P}^{4n+1}}(h)=[h\circ \Sigma^{q}f_{\mathbb{C}\mathbb{P}^{4n+1}}]\neq0,$$ where $f^*_{\mathbb{C}\mathbb{P}^{4n+1}}:\pi^0(\mathbb{S}^{8n+2})\to \pi^0(\mathbb{C}\mathbb{P}^{4n+1})$. A similar argument given in the proof of Theorem \ref{lemhit} using the commutative diagram (\ref{digram1}) shows that there exists an exotic sphere $\Sigma\in \Theta_{8n+2}$ such that $\psi_{*}^{-1}\circ \phi_{*}(\eta)=\Sigma$, $d_{\mathbb{R}}(\eta)=\alpha(\Sigma)=0$ and $\mathbb{C}\mathbb{P}^{4n+1}\#\Sigma$ is not concordant to  $\mathbb{C}\mathbb{P}^{4n+1}$, where $\psi_{*}$ and $\phi_{*}$ are the isomorphisms as in Equation (\ref{eq1.bru}) and (\ref{eq2.bru}) respectively. This implies that $\Sigma$ is a Farrell-Jones sphere such that $\Sigma\in \rm{Ker}(d_{\mathbb{R}})$. 
By Theorem \ref{con-iner}, $I(\mathbb{C}\mathbb{P}^{4n+1})\neq \rm{Ker}(d_{\mathbb{R}})$.
\item [\bf{2.}] If all non-zero elements in $\rm{Ker}(d_{\mathbb{R}})$ satisfy the condition $(\star)$ in Question \ref{raquestion}, then, by the above remark (1), $\Sigma\notin I(\mathbb{C}\mathbb{P}^{4n+1})$ for all exotic sphere $\Sigma\in \rm{Ker}(d_{\mathbb{R}})$ and hence $I(\mathbb{C}\mathbb{P}^{4n+1})=0$.
\end{itemize}}
\end{remark}
\begin{theorem}
 Let $n$ be a positive integer such that $\Theta_{8n+2}$ is a cyclic group of order 2.
 Then $I(\mathbb{C}\mathbb{P}^{4n+1})=0$.
\end{theorem}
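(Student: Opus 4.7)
The plan is to combine Theorem \ref{lemhit} with Theorem \ref{con-iner} once I identify the generator of $\Theta_{8n+2}$ as a Hitchin sphere. Since $\Theta_{8n+2}\cong\mathbb{Z}_2$ consists of exactly two elements, it suffices to show that the unique non-trivial element fails to lie in $I(\mathbb{C}\mathbb{P}^{4n+1})$.

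First I would invoke Remark \ref{rem.ander}(1), which records that the Adams element $\mu_{8n+2}\in\pi^s_{8n+2}$ satisfies $d_{\mathbb{R}}(\mu_{8n+2})=1$ by \cite[Theorem 1.2]{Ada66}, and that, under the composite isomorphism $\psi_*^{-1}\circ\phi_*$ of Equations (\ref{eq1.bru}) and (\ref{eq2.bru}), $\mu_{8n+2}$ maps to an order-$2$ element $\Sigma^{8n+2}\in\Theta_{8n+2}$ with $\alpha(\Sigma^{8n+2})=1$. In particular $\Sigma^{8n+2}$ is a Hitchin sphere by Definition \ref{HISP}.

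Next, since $\Theta_{8n+2}$ is assumed to be cyclic of order $2$, this element $\Sigma^{8n+2}$ must be the unique non-zero element of $\Theta_{8n+2}$. Applying Theorem \ref{lemhit}, $\Sigma^{8n+2}$ is a Farrell-Jones sphere, and then Theorem \ref{con-iner} gives $\Sigma^{8n+2}\notin I(\mathbb{C}\mathbb{P}^{4n+1})$.

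Finally, since $0\in I(\mathbb{C}\mathbb{P}^{4n+1})$ trivially and the only other element of $\Theta_{8n+2}$ has just been excluded, we conclude $I(\mathbb{C}\mathbb{P}^{4n+1})=0$. There is no real obstacle here: the entire argument is an assembly of results already in the paper, and the only point that needs care is the non-vanishing of $\alpha$ on the generator, which is ensured by Adams's computation of $d_{\mathbb{R}}(\mu_{8n+2})=1$ together with Lemma \ref{adaminva}.
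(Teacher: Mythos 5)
Your proposal is correct and follows essentially the same route as the paper: identify the generator of $\Theta_{8n+2}\cong\mathbb{Z}_2$ with the Adams element $\mu_{8n+2}$ via the isomorphisms of Equations (\ref{eq1.bru}) and (\ref{eq2.bru}), conclude via $d_{\mathbb{R}}(\mu_{8n+2})=1$ and Lemma \ref{adaminva} that it is a Hitchin sphere, and then apply Theorem \ref{lemhit} together with Theorem \ref{condiff} (equivalently Theorem \ref{con-iner}) to exclude it from $I(\mathbb{C}\mathbb{P}^{4n+1})$. No substantive differences from the paper's argument.
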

\begin{proof}
Let $\Sigma^{8n+2}$ be the generator of $\Theta_{8n+2}\cong \mathbb{Z}_2$. Let $$\psi_{*}:\Theta_{8n+2}\to \pi_{8n+2}(F/O)~~~ {\rm{and}}~~~  \phi_{*}:\pi^{s}_{8n+2}\to \pi_{8n+2}(F/O)$$ be the isomorphisms as in Equation (\ref{eq1.bru}) and (\ref{eq2.bru}). By \cite[Theorem 1.2]{Ada66}, there exists an element $\mu_{8n+2}$ of order 2 in $\pi^{s}_{8n+2}$. This shows that $$\phi_{*}^{-1}\circ \psi_{*}(\Sigma^{8n+2})=\mu_{8n+2}.$$ By \cite[Theorem 1.2]{Ada66} and Lemma \ref{adaminva}, we have $d_{\mathbb{R}}(\mu_{8n+2})=\alpha(\Sigma^{8n+2})=1$. This implies that $\Sigma^{8n+2}$ is a Hitchin sphere. By Theorem \ref{lemhit} and Theorem \ref{condiff}, $\mathbb{C}\mathbb{P}^{4n+1}\#\Sigma^{8n+2}$ is not diffeomorphic to $\mathbb{C}\mathbb{P}^{4n+1}$. 
This implies that\\ $I(\mathbb{C}\mathbb{P}^{4n+1})=0$.
\end{proof}

\end{document}